\newtheorem{theorem}{Theorem}[section]
\newtheorem{lemma}[theorem]{Lemma}
\newtheorem{proposition}{Proposition}
\theoremstyle{definition}
\newtheorem{remark}{Remark}
\begin{document}
\thispagestyle{empty}
\title[Determining a boundary coefficient]{Determining a boundary coefficient in a dissipative wave equation: uniqueness and directional Lipschitz stability}

\author{Ka\" \i s Ammari}
\address{UR Analysis and Control of PDE, UR 13ES64, Department of Mathematics, Faculty of Sciences of Monastir, University of Monastir,
5019 Monastir, Tunisia}
\email{kais.ammari@fsm.rnu.tn}

\author{Mourad Choulli}
\address{Institut \'Elie Cartan de Lorraine, UMR CNRS 7502, Universit\'e de Lorraine-Metz, Ile du Saulcy, F-57045 Metz cedex 1, France}
\email{mourad.choulli@univ-lorraine.fr}
  
\begin{abstract}
We are concerned with the problem of determining the damping boundary coefficient appearing in a dissipative wave equation from a single boundary measurement. We prove that the uniqueness holds at the origin provided that the initial condition is appropriately chosen. We show that the choice of the initial condition leading to uniqueness is related to a fine version of unique continuation property for elliptic operators. We also establish a Lipschitz directional stability estimate at the origin, which is obtained by a linearization process.
\end{abstract}

\subjclass[2010]{Primary: 35R30}

\keywords{Wave equation, damping boundary coefficient, uniqueness, Lipschitz directional stability}

\maketitle

\tableofcontents

\section{Introduction}

Let $\Omega$ be  a $C^\infty$-smooth bounded domain of $\mathbb{R}^d$ with boundary $\Gamma$. We assume that $\Gamma$ can be partitioned into two disjoint closed subsets with non empty interior that are denoted by $\Gamma _0$ and $\Gamma _1$.

\smallskip
We set, where $\tau >0$ is fixed,
\[
Q=\Omega \times (0,\tau),\quad \Sigma _0=\Gamma_0 \times (0,\tau),\quad \Sigma _1=\Gamma_1 \times (0,\tau),
\]
and we consider the following initial-boundary value problem (abbreviated to IBVP in the sequel) for the wave equation:
\begin{equation}\label{ibvp1}
\left\{
\begin{array}{lll}
\partial _t^2 u - \Delta u = 0 \;\; &\textrm{in}\;   Q, 
\\
u = 0 &\textrm{on}\;  \Sigma _0, 
\\
\partial _\nu u +b(x)\partial _tu= 0 &\textrm{on}\;  \Sigma _1, 
\\
u(\cdot ,0) = u^0,\; \partial_t u (\cdot ,0) = v^0.
\end{array}
\right.
\end{equation}

We are interested in the inverse problem of determining the boundary coefficient $b$ from the boundary measurement $\partial _\nu {u_b}_{|\Sigma _1}$, where $u_b$ is the solution, if it exists, of the IBVP \eqref{ibvp1}. At least formally, if each  term in the left hand side of the third equation of \eqref{ibvp1} belongs to $L^2(\Sigma  _1)$, then the inverse problem under consideration is equivalent to determining $b$ from $b\partial _t{u_b}_{|\Sigma _1}$. Therefore this problem is highly non linear. 

\smallskip
Before stating our uniqueness and stability results, we need to reformulate the IBVP \eqref{ibvp1} as an abstract Cauchy problem. To this purpose, we set 
\[
V=\{w\in H^1(\Omega );\; w_{|\Gamma _0}=0\}. 
\]
Here $w_{|\Gamma _0}$ is to be understood in the usual trace sense. When it is equipped with the $H^1$-norm, $V$ is a Hilbert space. We note in addition that the Poincar\'e inequality holds true for $V$ and therefore $w\mapsto \|\nabla w\|_{L^2(\Omega )^d}$ defines an equivalent norm on $V$. 

\smallskip
For $s>0$ and $1\leq r\leq \infty$ , we introduce the vector space
\[
B_{s,r}(\mathbb{R}^{d-1} ):=\{ w\in \mathscr{S}'(\mathbb{R}^{d-1});\; (1+|\xi |^2)^{s/2}\widehat{w}\in 
L^r(\mathbb{R}^{d-1} )\},
\]
where $\mathscr{S}'(\mathbb{R}^{n-1})$ is the space of temperate distributions on $\mathbb{R}^{d-1}$ and
$\widehat{w}$ is the Fourier transform of $w$. Equipped with its natural norm 
\[
\|w\|_{B_{s,r}(\mathbb{R}^{d-1} )}:=\|(1+|\xi |^2)^{s/2}\widehat{w}\|_{ L^r(\mathbb{R}^{d-1} )},
\]
$B_{s,r}(\mathbb{R}^{d-1} )$ is a Banach space (it is noted that $B_{s,2}(\mathbb{R}^{d-1} )$ is merely the usual Sobolev
space $H^s(\mathbb{R}^{n-1} )$). By using local charts and a partition of unity, we construct 
$B_{s,r}(\Gamma _1)$ from $B_{s,r}(\mathbb{R}^{d-1})$ similarly as $H^s(\Gamma _1)$ is built 
from $H^s(\mathbb{R}^{d-1})$.

\smallskip
The main interest in this spaces is that the multiplication by a function from $B_{s,1}(\Gamma _1)$ defines a bounded operator on $H^s(\Gamma _1)$ (we refer to \cite{Ch}[Theorem 2.1, page 605] for more details). We set
\[
B_{1/2,1}^+(\Gamma _1)=\{ b\in B_{1/2,1}(\Gamma _1);\; 0\leq b\}.
\]

Let $b\in B_{1/2,1}^+(\Gamma _1)$. We define on $\mathcal{H}=V\times L^2(\Omega )$ the unbounded operator $\mathcal{A}_b$ as follows
\[
\mathcal{A}_b\left( \begin{array} {c} u\\v\end{array}\right) = \left(\begin{array}{cc}0&I\\ \Delta & 0\end{array}\right)\left( \begin{array} {c} u\\v\end{array}\right)
\]
with domain
\[
D(\mathcal{A}_b)=\left\{ \left( \begin{array} {c} u\\v\end{array}\right)\in \left[H^2(\Omega )\cap V\right]\times V;\; \partial _\nu u+bv=0\; \rm{on}\; \Gamma _1\right\}.
\]

From \cite{TW}[Proposition 3.9.2, page 109], $\mathcal{A}_b$ is an m-dissipative operator and therefore it generates on $\mathcal{H}$ a $C_0$-semigroup of contractions $e^{t\mathcal{A}_b}$. In that case, we have, for any integer $k\geq 1$,
\begin{equation}\label{reg1}
e^{t\mathcal{A}_b}\left( \begin{array} {c} u^0\\v^0\end{array}\right)\in  \bigcap_{j=0}^kC^{k-j}([0,\tau ];D(\mathcal{A}_b^j)),\;\; \left( \begin{array} {c} u^0\\v^0\end{array}\right)\in D(\mathcal{A}_b^k),
\end{equation}
with the convention that $\mathcal{A}_b^0=I$ and $D(\mathcal{A}_b^0)=\mathcal{H}$.

\smallskip
Moreover, we have the estimate
\begin{equation}\label{sgest}
\left\|e^{t\mathcal{A}_b}\left( \begin{array} {c} u^0\\v^0\end{array}\right)\right\|_{\bigcap_{j=0}^kC^{k-j}([0,\tau ];D(\mathcal{A}_b^j))}\leq C \left\|\left( \begin{array} {c} u^0\\v^0\end{array}\right)\right\|_{D(\mathcal{A}_b^k)}.
\end{equation}
Here the constant $C$  doesn't depend on $u^0$ and $v^0$.

\smallskip
Let $A: D(A) \subset L^2(\Omega )\rightarrow L^2(\Omega )$ be the unbounded operator given by
\[
A=\Delta ,\;\; D(A)=\{w\in H^2(\Omega )\cap V;\; \partial _\nu w_{|\Gamma _1}=0\}.
\]
The following observation will be useful in the sequel: for any integer $k\geq 1$ and $b\in B_{1/2,1}^+(\Gamma _1)$, we have
\begin{equation}\label{in}
D(A^k)\times \{0\}\subset D(\mathcal{A}_b^k).
\end{equation}

\smallskip
We are now ready to state our main results.

\begin{theorem}\label{theoremu}
Let $b\in B_{1/2,1}^+(\Gamma _1)$.
\\
(a) $\mathscr{I}=\left\{ \left( \begin{array} {c} u^0\\0\end{array}\right);\; u^0\in D(A) \;\; \textrm{and}\;\; \partial _tu_0\neq 0\; {\rm a.e.\; on}\; \Sigma _1\right\}\neq \emptyset$.
\\
(b) Let $\left( \begin{array} {c} u^0\\0\end{array}\right)\in \mathscr{I}$. Then ${\partial _\nu u_b}_{|\Sigma _1}={\partial _\nu u_0}_{|\Sigma _1}$ implies $b=0$.
\end{theorem}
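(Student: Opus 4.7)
The plan is to handle (b) first, since it reveals the rationale for the condition defining $\mathscr{I}$, and then produce an explicit element of $\mathscr{I}$ using the first eigenfunction of the mixed Dirichlet--Neumann Laplacian.

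For (b), the decisive observation will be that $u^{0}\in D(A)$ together with $v^{0}=0$ makes the unperturbed solution $u_{0}$ (with $b\equiv 0$) satisfy $\partial_{\nu}u_{0}=0$ on $\Sigma_{1}$ for every $t$, simply because $u_{0}$ is then the solution of the mixed Dirichlet--Neumann wave IBVP, whose Neumann condition is preserved by the flow. The hypothesis $\partial_{\nu}{u_{b}}_{|\Sigma_{1}}=\partial_{\nu}{u_{0}}_{|\Sigma_{1}}$ is therefore equivalent to $\partial_{\nu}u_{b}=0$ on $\Sigma_{1}$. Consequently $u_{b}$ solves the same mixed Dirichlet--Neumann IBVP with Cauchy data $(u^{0},0)$ as $u_{0}$, so well-posedness forces $u_{b}=u_{0}$ on $Q$. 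Plugging this into the Robin-type boundary condition $\partial_{\nu}u_{b}+b\partial_{t}u_{b}=0$ yields the scalar identity $b(x)\partial_{t}u_{0}(x,t)=0$ a.e. on $\Sigma_{1}$. Using Fubini together with the genericity condition $\partial_{t}u_{0}\neq 0$ a.e. on $\Sigma_{1}$, one then concludes $b=0$ a.e. on $\Gamma_{1}$, hence in $B_{1/2,1}^{+}(\Gamma_{1})$.

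For (a), I would take $u^{0}=\phi_{1}$, a first eigenfunction of the self-adjoint operator $-A$ with smallest eigenvalue $\lambda_{1}>0$. The disjointness of the closed pieces $\Gamma_{0}$ and $\Gamma_{1}$ forces them to be unions of full connected components of $\Gamma$, so elliptic regularity on the smooth mixed problem gives $\phi_{1}\in C^{\infty}(\overline{\Omega})\subset D(A)$. The cosine functional calculus then delivers $u_{0}(x,t)=\cos(\sqrt{\lambda_{1}}\,t)\phi_{1}(x)$ and $\partial_{t}u_{0}(x,t)=-\sqrt{\lambda_{1}}\sin(\sqrt{\lambda_{1}}\,t)\phi_{1}(x)$. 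Since $\sin(\sqrt{\lambda_{1}}\,\cdot)$ has only finitely many zeros in $(0,\tau)$, the condition $\partial_{t}u_{0}\neq 0$ a.e. on $\Sigma_{1}$ reduces to $\phi_{1}(x)\neq 0$ for every $x\in\Gamma_{1}$. This I would obtain in two strokes: the usual Rayleigh quotient / strong maximum principle argument forces $\phi_{1}>0$ in $\Omega$, and if $\phi_{1}(x_{0})=0$ at some $x_{0}\in\Gamma_{1}$, then Hopf's boundary point lemma (valid since $\Omega$ is $C^{\infty}$) gives $\partial_{\nu}\phi_{1}(x_{0})<0$, contradicting the Neumann condition imposed by $D(A)$.

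The main obstacle in (b) is conceptual rather than technical: one must recognize that $u^{0}\in D(A)$ is tailored precisely so that $\partial_{\nu}u_{0}$ vanishes on $\Sigma_{1}$, and this is what lets the measurement equality be promoted from a one-dimensional constraint on the Neumann trace of $u_{b}$ to the full identity $u_{b}=u_{0}$ on $Q$. Without this promotion one is left only with the bilinear relation $b\partial_{t}u_{b}=0$, in which $\partial_{t}u_{b}$ still depends on the unknown $b$ and cannot be removed. Part (a) poses no comparable difficulty; it is effectively a boundary point lemma coupled with the explicit cosine form of $u_{0}$.
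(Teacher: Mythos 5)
Your argument for part (b) coincides with the paper's: since $u^0\in D(A)$ and $v^0=0$, the free solution satisfies $\partial_\nu u_0=0$ on $\Sigma_1$, so the measurement equality forces $\partial_\nu u_b=0$ on $\Sigma_1$; uniqueness for the resulting mixed Dirichlet--Neumann IBVP gives $u_b=u_0$, the dissipative boundary condition then reads $b\,\partial_t u_0=0$ a.e.\ on $\Sigma_1$, and the a.e.\ non-vanishing assumption yields $b=0$. For part (a), however, you take a genuinely different and more elementary route. The paper proves (Lemma \ref{lemma1a}) that \emph{every} eigenfunction $\varphi_k$ of $-A$ has a zero set of $(d-1)$-dimensional measure zero on $\Gamma_1$; this rests on a reflection across $\Gamma_1$ combined with Robbiano's theorem \cite{Ro} on the Hausdorff dimension of the critical zero set of solutions of elliptic equations with Lipschitz coefficients, plus the Gilbarg--Trudinger lemma that the tangential gradient vanishes a.e.\ on level sets. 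You instead restrict to the ground state $\phi_1$: positivity of $\phi_1$ in $\Omega$ (Rayleigh quotient plus strong maximum principle, using that $\Omega$ is connected) together with the Hopf boundary point lemma at any putative zero $x_0\in\Gamma_1$ contradicts the Neumann condition and gives the stronger pointwise conclusion $\phi_1>0$ on $\Gamma_1$; note that since $\Delta\phi_1=-\lambda_1\phi_1\le 0$, you may apply Hopf to the subharmonic function $-\phi_1$ with vanishing zeroth-order coefficient, so there is no sign obstruction from $c=\lambda_1>0$. Both routes use that $\Gamma_0$ and $\Gamma_1$, being disjoint, closed and covering $\Gamma$, are unions of connected components of $\Gamma$, so that elliptic regularity (Theorem \ref{theoremer}) gives smoothness up to $\Gamma_1$, and both need $\lambda_1>0$ (Poincar\'e on $V$) so that $\sin(\sqrt{\lambda_1}\,t)$ vanishes only for finitely many $t$. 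What your route buys is the avoidance of the nodal-set machinery; what the paper's route buys is a much larger supply of admissible initial data (all eigenfunctions, hence the set $\mathscr{I}^\infty$ of the Remark), which is what feeds Theorem \ref{theorems}. Since part (a) only asks that $\mathscr{I}\neq\emptyset$, your single example suffices and the proof is correct.
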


\begin{theorem}\label{theorems}
We fix $b\in B_{1/2,1}^+(\Gamma _1)$ non identically equal to zero. We assume
\begin{equation}\label{sc1}
\left( \begin{array} {c} u^0\\v^0\end{array}\right)\in \bigcap_{0\leq \rho \leq 1}D(\mathcal{A}_{\rho b}^7)
\end{equation}
and 
\begin{equation}\label{sc2}
b\partial _tu_0\not\equiv 0.
\end{equation}
Then there exists $0<\rho _0 \leq 1$ so that
\[
\widetilde{\kappa} \| \rho b-0\|_{B_{1/2,1}(\Gamma _1)} \leq \| \partial _\nu u_{\rho b}-\partial_\nu u_0\|_{L^2(\Sigma _1)},\;\; 0\leq \rho \leq \rho_0.
\]
Here $\widetilde{\kappa}$ is a constant independent on $\rho$.
\end{theorem}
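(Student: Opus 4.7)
The plan hinges on reading the measurement through the boundary condition. Since $\partial_\nu u_{\rho b}=-\rho b\,\partial_t u_{\rho b}$ on $\Sigma_1$ and $\partial_\nu u_0\equiv 0$ on $\Sigma_1$ (the BC degenerates when the damping vanishes), one has the exact identity
\[
\|\partial_\nu u_{\rho b}-\partial_\nu u_0\|_{L^2(\Sigma_1)}=\rho\,\|b\,\partial_t u_{\rho b}\|_{L^2(\Sigma_1)}.
\]
Using $\|\rho b\|_{B_{1/2,1}(\Gamma_1)}=\rho\|b\|_{B_{1/2,1}(\Gamma_1)}$, the desired Lipschitz estimate reduces to producing a positive lower bound, uniform in $\rho\in[0,\rho_0]$, on $\|b\,\partial_t u_{\rho b}\|_{L^2(\Sigma_1)}$. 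The constant $\widetilde\kappa$ of the theorem will then be this lower bound divided by $\|b\|_{B_{1/2,1}(\Gamma_1)}$.

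At $\rho=0$ the value is $M:=\|b\,\partial_t u_0\|_{L^2(\Sigma_1)}$, and by assumption \eqref{sc2} one has $M>0$. The linearization alluded to in the introduction would then be implemented as a continuity statement at $\rho=0$. Write $w:=u_{\rho b}-u_0$; then $w$ solves the wave equation with homogeneous Cauchy data, $w=0$ on $\Sigma_0$, and the inhomogeneous dissipative condition
\[
\partial_\nu w+\rho b\,\partial_t w=-\rho b\,\partial_t u_0\quad\textrm{on }\Sigma_1,
\]
whose right-hand side is an $O(\rho)$ source. Using the uniform-in-$\rho$ regularity encoded by \eqref{sc1}--\eqref{sgest}, together with the variation-of-parameters formula for the contraction semigroup $e^{t\mathcal{A}_{\rho b}}$ and hidden trace regularity for the wave equation with boundary source, I would derive $\|\partial_t w\|_{L^2(\Sigma_1)}\to 0$ as $\rho\to 0^+$, with a rate linear in $\rho$. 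Combined with $B_{1/2,1}(\Gamma_1)\hookrightarrow L^\infty(\Gamma_1)$ (a direct consequence of $\widehat b\in L^1$), one gets
\[
\|b\,\partial_t u_{\rho b}\|_{L^2(\Sigma_1)}\ge M-\|b\|_{L^\infty(\Gamma_1)}\|\partial_t w\|_{L^2(\Sigma_1)}\ge M/2
\]
whenever $\rho\le\rho_0$ is chosen small enough. The Lipschitz estimate then follows with $\widetilde\kappa=M/(2\|b\|_{B_{1/2,1}(\Gamma_1)})$.

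The main obstacle is the uniform-in-$\rho$ boundary estimate for $\partial_t w$. Even attributing pointwise meaning to this trace in $L^2(\Sigma_1)$ requires several levels of regularity beyond mild solutions; obtaining an $O(\rho)$ bound with a constant that does \emph{not} blow up as the boundary damping $\rho b$ varies requires one to pass the trace through the successive domains $D(\mathcal{A}_{\rho b}^j)$, whose compatibility conditions couple $\rho b$ to normal derivatives via the multiplication $v\mapsto \rho b\,v$ on $\Gamma_1$. This is precisely where the intersection $\bigcap_{0\le \rho\le 1}D(\mathcal{A}_{\rho b}^7)$ in \eqref{sc1} intervenes: the multiplier property of $B_{1/2,1}$ on $H^s(\Gamma_1)$ ensures that the nested compatibility conditions can be checked uniformly in $\rho$, and $k=7$ provides enough derivatives to close the argument through \eqref{sgest} together with standard wave trace estimates. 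Once this uniform estimate for $\partial_t w_{|\Sigma_1}$ is secured, the rest is a soft continuity step, and the directional Lipschitz stability follows.
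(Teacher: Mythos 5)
Your overall strategy is the same linearization-at-the-origin that the paper uses, and your endgame is sound --- in fact slightly leaner: from the exact identity $\partial_\nu u_{\rho b}-\partial_\nu u_0=-\rho b\,\partial_t u_{\rho b}$ on $\Sigma_1$ you go straight to a lower bound via $\|b\,\partial_t u_{\rho b}\|_{L^2(\Sigma_1)}\geq \|b\,\partial_t u_0\|_{L^2(\Sigma_1)}-\|b\,(\partial_t u_{\rho b}-\partial_t u_0)\|_{L^2(\Sigma_1)}$, whereas the paper introduces the directional derivative $w_0$ and an $O(\rho^2)$ remainder before extracting the same conclusion. Both reductions stand or fall on one and the same estimate: $\|\partial_t(u_{\rho b}-u_0)\|_{L^2(\Sigma_1)}=O(\rho)$ uniformly for small $\rho$.

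That central estimate is exactly where your proposal has a genuine gap: you name it as ``the main obstacle'' and propose to close it with variation of parameters for $e^{t\mathcal{A}_{\rho b}}$ plus hidden trace regularity and a uniform-in-$\rho$ control of the constants in \eqref{sgest}, but none of this is carried out, and the route is harder than necessary --- incorporating the inhomogeneous boundary source $-\rho b\,\partial_t u_0$ into the $\mathcal{A}_{\rho b}$ semigroup framework itself requires a lifting, and the graph norms of $D(\mathcal{A}_{\rho b}^j)$ (hence the constants in \eqref{sgest}) depend on $\rho$ through the compatibility conditions. The paper's mechanism avoids both difficulties: it first uses \eqref{reg1} and the hypothesis \eqref{sc1} (seven powers of the generator) to obtain $\partial_t u_{\rho b}{}_{|\Gamma_1}\in C^6([0,\tau];H^{1/2}(\Gamma_1))$ by interior regularity and trace theorems; it then regards $v_\rho=u_{\rho b}-u_0$ as a solution of the \emph{pure Neumann} problem \eqref{ibvp2} with prescribed datum $g=-\rho b\,\partial_t u_{\rho b}$, which is $O(\rho)$ in $C^6([0,\tau];H^{1/2}(\Gamma_1))$ by the multiplier property of $B_{1/2,1}(\Gamma_1)$; finally it lifts $g$ by the harmonic extension of Lemma \ref{lemma1} and applies Duhamel's formula with the \emph{fixed}, $\rho$-independent generator $\mathcal{A}_0$ (Proposition \ref{proposition1}, estimate \eqref{aest4}) to get \eqref{aest6}, i.e. $\|\partial_t u_{\rho b}-\partial_t u_0\|_{C^3([0,\tau];H^{1/2}(\Gamma_1))}\leq C\rho$, from which the $L^2(\Sigma_1)$ bound follows by taking traces. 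No hidden regularity and no uniformity of semigroup constants in $\rho$ is needed beyond the contraction property. To complete your argument you would either have to supply the uniform hidden-regularity estimates you invoke, or adopt this ``move the whole damping term to the Neumann datum and work with $\mathcal{A}_0$'' device; as written, the key quantitative step is asserted rather than proved.
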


\begin{remark}
From the proof of Theorem \ref{theoremu} (a), we deduce 
\[
\mathscr{I}^\infty =\left\{ \left( \begin{array} {c} u^0\\0\end{array}\right);\; u^0\in C^\infty(\overline{\Omega}) \;\; \textrm{and}\;\; \partial _tu_0\neq 0\; {\rm a.e.\; on}\; \Sigma _1\right\}\neq \emptyset.
\]
Therefore, we can replace in Theorem \ref{theorems} \eqref{sc1} and \eqref{sc2} by 
\[
\left( \begin{array} {c} u^0\\v^0\end{array}\right)=\left( \begin{array} {c} u^0\\0\end{array}\right)\in \mathscr{I}^\infty .
\]
\end{remark}

The authors have already obtained in \cite{AC} a log-type stability estimate for the inverse problem consisting in determining both the potential and the damping coefficient in a dissipative wave equation from boundary measurements. These measurements correspond to all possible choices of the initial condition. The proofs in \cite{AC} are essentially based on observability inequalities for exactly controllable  systems and spectral decompositions.

\smallskip
The problem of determining a potential in a wave equation from the so-called Dirichlet-to-Neumann was map studied by many authors. This problem was initiated by Rakesh  and Symes \cite{RS}. We refer the reader who want to learn more on this problem to \cite{BCY} and references therein. 

\smallskip
The rest of this text is devoted to the proof of our main results. We prove Theorem \ref{theoremu} in Section \ref{section2} and Theorem \ref{theorems} in Section \ref{section3}.


\section{Proof of Theorem \ref{theoremu}}\label{section2}

We first prove a preliminary result. Henceforth, $\mathcal{L}^k$ denotes the $k$-dimensional Lebesgue measure. 


\begin{lemma}\label{lemma1a}
Let $\lambda \in \mathbb{R}$ and $u\in H^2(\Omega )\cap C^1(\overline{\Omega })$ satisfying 
\[
\Delta u+\lambda u= 0\; \textrm{in}\;  \Omega \;\;\; \textrm{and}\;\;\;  \partial _\nu u=0\ \; \textrm{on}\; \Gamma _1. 
\]
Then 
\[
\mathcal{L}^{d-1}(\{ x\in \Gamma _1;\; u(x)= 0\})=0.
\]
\end{lemma}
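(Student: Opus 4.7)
The plan is to prove the contrapositive: assuming $\mathcal{L}^{d-1}(Z)>0$ with $Z=\{x\in\Gamma_1:u(x)=0\}$, I will deduce $u\equiv 0$. By interior elliptic regularity $u$ is real-analytic in $\Omega$, and by boundary elliptic regularity for the Neumann problem on the $C^\infty$-portion $\Gamma_1$, $u$ extends $C^\infty$-smoothly in a (relative) neighborhood inside $\overline{\Omega}$ of every point of $\Gamma_1$. I pick a $(d-1)$-dimensional Lebesgue density point $x_0\in Z$ and fix boundary-normal coordinates $(y',y_d)\in\mathbb{R}^{d-1}\times[0,\delta)$ flattening $\Gamma_1$ to $\{y_d=0\}$ and $\Omega$ locally to $\{y_d>0\}$. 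In these coordinates the equation reads $\partial_{y_d}^2u+c(y)\partial_{y_d}u+L'(y,\partial_{y'})u+\lambda u=0$, where $L'$ is a tangential second-order operator and all coefficients are $C^\infty$.

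The first step is to show that all tangential derivatives of $u$ vanish at $x_0$. The trace $u(\cdot,0)$ is $C^\infty$ near $0$, and its zero set has $(d-1)$-density $1$ at $0$. If its Taylor expansion at $0$ had a nonzero term, the leading homogeneous part $P$, of some degree $N$, would be a nonzero polynomial; a cone argument based on $|P(r\theta)|=r^N|P(\theta)|$ together with the fact that $\{\theta\in S^{d-2}:|P(\theta)|\geq\varepsilon\}$ has measure tending to that of $S^{d-2}$ as $\varepsilon\to 0$ shows that the zero set of $u(\cdot,0)$ has density $0$ at $0$, contradicting the density-$1$ assumption. Hence $\partial_{y'}^\alpha u(x_0)=0$ for every $\alpha$. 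The Neumann condition $\partial_{y_d}u\equiv 0$ on $\{y_d=0\}$ gives also $\partial_{y'}^\alpha\partial_{y_d}u(x_0)=0$ for all $\alpha$. Using the PDE to solve for $\partial_{y_d}^2u$ and differentiating inductively in $y_d$, I obtain $\partial_{y_d}^k\partial_{y'}^\alpha u(x_0)=0$ for every $k$ and $\alpha$; in other words, $u$ vanishes to infinite order at $x_0$.

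The last step, which I expect to be the main obstacle, is to pass from infinite-order vanishing at the boundary point $x_0$ to $u\equiv 0$ on $\Omega$. I plan to perform the even reflection $\widetilde{u}(y',y_d):=u(y',|y_d|)$ in the flattened coordinates: the Neumann condition together with the form of the Laplacian in boundary-normal coordinates ensures that $\widetilde{u}$ is $C^1$ on a two-sided neighborhood of $0$ and satisfies an elliptic equation there with Lipschitz principal part and $L^\infty$ lower-order coefficients. Since $\widetilde{u}$ vanishes to infinite order at the now-interior point $0$, an Aronszajn-type strong unique continuation principle gives $\widetilde{u}\equiv 0$ near $0$, whence $u\equiv 0$ on a nonempty open subset of $\Omega$. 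The real-analyticity of $u$ on the connected open set $\Omega$ then forces $u\equiv 0$ throughout $\Omega$, as desired.
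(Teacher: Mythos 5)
Your proof is correct, but it takes a genuinely different route from the paper's. Both arguments flatten the boundary near a point of $\Gamma_1$ and exploit the Neumann condition to perform an even reflection, producing a solution of an elliptic equation with Lipschitz principal coefficients on a full ball. From there the paper invokes Robbiano's theorem \cite{Ro} on the Hausdorff dimension of the critical zero set $\{w=0,\ \nabla w=0\}$, and then upgrades ``critical zeros have $\mathcal{L}^{d-1}$-measure zero on $B_0$'' to ``zeros have measure zero'' by combining \cite{GT}[Lemma 7.7] (the tangential gradient of the trace vanishes a.e.\ on its zero set) with $\partial_\nu u=0$; this works directly at the stated $H^2\cap C^1$ regularity. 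You instead take a Lebesgue density point of the boundary zero set, annihilate the full Taylor expansion of the trace there by the cone argument, propagate the vanishing to all normal derivatives through the equation, and conclude by Aronszajn-type strong unique continuation for the reflected function plus analyticity (or weak unique continuation) on the connected set $\Omega$. Your route trades the finer dimension estimate of \cite{Ro} for the classical SUCP, at the cost of needing $u\in C^\infty$ up to $\Gamma_1$ --- which is indeed available by Neumann elliptic bootstrapping, since $\Gamma_0$ and $\Gamma_1$ are disjoint closed (hence clopen) pieces of the smooth boundary, but is strictly more than the stated hypothesis and should be justified. Two further remarks: both proofs implicitly assume $u\not\equiv 0$ (Robbiano's theorem concerns nonzero solutions, and your contrapositive lands exactly on $u\equiv 0$), so the lemma is really about nonzero eigenfunctions, which is how it is applied; and since $\widetilde u$ is only $C^1$ across $\{y_d=0\}$, you should convert the pointwise infinite-order vanishing of the one-sided smooth function into the $L^2$-averaged vanishing $\int_{B(0,r)}|\widetilde u|^2=O(r^N)$ for all $N$ (via Taylor's theorem and the symmetry of $\widetilde u$), which is the hypothesis Aronszajn-type theorems actually use for $H^2_{\mathrm{loc}}$ solutions.
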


\begin{proof}[Sketch of the proof]
Since $\Omega$ is $C^\infty$-smooth. $\Gamma _i$ can covered by a finite number of open subsets $U$, where $U$ is such there exists a $C^\infty$-diffeomorphism $\psi:U\rightarrow B$, $B=B(0,1)$, satisfying
\[
\psi (U\cap \Omega )\subset B_+,\quad \psi (U\cap \Gamma )\subset B_0,
\]
with
\[
B_+=\{ x=(x',x_d)\in B;\; x_d>0\},\quad B_0=\{ x=(x',x_d)\in B;\; x_d=0\}.
\]
We set $v(y)=u(\psi ^{-1}(y))$, $y\in B_+$. Then $Pv=0$ in $ B_+$ and $\partial _dv=0$ on $B_0$. Here $P$ is a second order operator with $C^\infty$ coefficients. We extend $v$ to the whole of $B$ by setting $w=v$ in $B_+$ and $w(x',-x_d)=v(x',x_d)$, $(x',x_d)\in B_+$. We have $w\in H^2(\Omega )\cap C^1(\overline{B})$ because $\partial _dv=0$ on $B_0$ and $Qw=0$ in $B$, where $Q$ is a second order operator whose coefficients are obtained by taking the even extension of the coefficients of $P$. Checking the details of this construction, we see that $Q$ has Lipschitz coefficients. 

\smallskip
Let $\epsilon >0$ be given. If we denote by $\mathcal{H}^k$ the $k$-dimensional Hausdorff measure, we get by applying \cite{Ro}[Theorem 2, page 342] that 
\[
\mathcal{H}^{d-2+\epsilon}(\{z\in B;\; w(z)=0,\; \nabla w(z)=0\})=0.
\]
Therefore,
\[
\mathcal{H}^{d-2+\epsilon}(\{y\in B_0;\; v(y)=0,\; \nabla v(y)=0\})=0,
\]
In particular
\[
\mathcal{L}^{d-1}(\{y\in B_0;\; v(y)=0,\; \nabla v(y)=0\})=0.
\]
On the other hand by \cite{GT}[Lemma 7.7, page 152], $\nabla _{y'}v(\cdot ,0)=0$ a.e. in any set where $v(\cdot ,0)$ is constant. Hence
\[
\mathcal{L}^{d-1}(\{y\in B_0;\; v(y)=0,\; \partial _d v(y)=0\})=0.
\]
Bearing in mind that that a Lipschitz map preserves Lebesgue sets of zero measure, we get
\[
\mathcal{L}^{d-1}(\{x\in \Gamma _1;\; u(x)=0,\; \partial _\nu u(x)=0\})=0.
\]
But, $\partial _\nu u=0\in \Gamma _1$. Hence
\[
\mathcal{L}^{d-1}(\{x\in \Gamma _1;\; u(x)=0\})=0.
\]
\end{proof}

The following regularity theorem will be useful in the sequel. Since this result is not explicitly recorded in the literature, for sake of completeness we sketch its proof.
\begin{theorem}\label{theoremer}
Let $m\geq 0$ be an integer. For any $f\in H^m(\Omega )$, $g\in H^{m+3/2}(\Gamma _0)$ and $h\in H^{m+1/2}(\Gamma _1)$, the boundary value problem 
\[
\left\{
\begin{array}{lll}
-\Delta u = f \;\; &\mbox{in}\;   Q, 
\\
u =g &\mbox{in}\;  \Gamma_0 , 
\\
\partial _\nu u=h &\mbox{in}\;  \Gamma_1 .
\end{array}
\right.
\]
has a unique $u\in H^{m+2}(\Omega )$ satisfying
\begin{equation}\label{est}
\|u\|_{H^{2+m}(\Omega )}\leq C\left( \|f\|_{H^m(\Omega )}+ \|g\|_{H^{m+3/2}(\Gamma _0)}+\|h\|_{H^{m+1/2}(\Gamma _1)}\right).
\end{equation}
Here the constant $C$ is independent on $f$, $g$ and $h$.
\end{theorem}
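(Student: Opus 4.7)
My strategy is to prove the theorem by induction on $m$, using at each step a reduction to homogeneous boundary data followed by a localization with flatten-and-reflect to invoke interior elliptic regularity. The crucial structural feature that makes this work cleanly is that $\Gamma_0$ and $\Gamma_1$ are \emph{disjoint closed} subsets of $\Gamma$, so they lie at positive distance from one another; no mixed Dirichlet/Neumann corner is ever encountered and every boundary chart can be arranged to meet only one of the two pieces.

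For the base case $m=0$ I would first lift the data. The trace map $H^2(\Omega)\to H^{3/2}(\Gamma_0)$ is surjective with a bounded right inverse, and after multiplication by a cutoff $\chi\in C^\infty(\overline{\Omega})$ equal to $1$ near $\Gamma_0$ and supported away from $\Gamma_1$ one obtains a lifting $G\in H^2(\Omega)$ of $g$ whose trace and normal derivative both vanish near $\Gamma_1$. A symmetric construction gives a lifting $H\in H^2(\Omega)$ of $h$ with $\partial_\nu H = h$ on $\Gamma_1$ and $H\equiv 0$ near $\Gamma_0$. Subtracting $G+H$ reduces the problem to $-\Delta u = \widetilde{f}\in L^2(\Omega)$ with $u_{|\Gamma_0}=0$ and $\partial_\nu u_{|\Gamma_1}=0$, for which existence and uniqueness follow immediately from Lax--Milgram on $V$, using the Poincar\'e inequality already recorded in the introduction. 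The promotion to $H^2$ is then standard mixed-boundary elliptic regularity: a finite partition of unity separates interior, pure-Dirichlet, and pure-Neumann charts, and in each boundary chart one flattens the boundary and extends the (reduced) solution by odd reflection near $\Gamma_0$ or by even reflection near $\Gamma_1$, turning the problem into an interior regularity statement for a second-order elliptic operator with smooth coefficients.

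For $m\geq 1$ I would bootstrap inductively. Assuming the estimate at level $m-1$, the inductive hypothesis provides $u\in H^{m+1}(\Omega)$ together with its bound. In each flattened boundary chart both the homogeneous Dirichlet condition and the homogeneous Neumann condition commute with tangential differentiation, so each tangential derivative $\partial_{y'}^\alpha u$ with $|\alpha|\leq m$ again satisfies a boundary-value problem of the same form with data one derivative more regular, to which the inductive hypothesis applies. This controls every $(m+2)$-order derivative of $u$ that involves at most one normal differentiation; the purely normal derivatives of order $m+2$ are then recovered algebraically by solving the flattened equation for $\partial_d^2 u$ in terms of $f$ and derivatives of $u$ containing at most one factor of $\partial_d$, and iterating. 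Summing the local estimates against the partition of unity yields \eqref{est}.

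The main obstacle in executing this plan is the high-regularity lifting: producing $H\in H^{m+2}(\Omega)$ with $\partial_\nu H_{|\Gamma_1}=h$ and $H\equiv 0$ near $\Gamma_0$ (and symmetrically for $g$). This is precisely where the hypothesis $\overline{\Gamma_0}\cap\overline{\Gamma_1}=\emptyset$ is indispensable, since it guarantees a smooth cutoff with the required separation properties and lets one build the lifting locally in a tubular neighborhood of $\Gamma_1$ by the usual normal-coordinate extension of the Neumann datum. Once this lifting is in hand the rest of the argument is routine, and the constant in \eqref{est} depends only on $\Omega$, $m$, and the fixed cutoffs and charts.
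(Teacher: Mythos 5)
Your proposal is correct and follows essentially the same route as the paper: lift the boundary data to reduce to homogeneous conditions, solve by Lax--Milgram on $V$, use a partition of unity exploiting that $\Gamma_0$ and $\Gamma_1$ are disjoint closed sets to split into a pure Dirichlet and a pure Neumann problem, and induct on $m$. The only difference is one of packaging: where you sketch the local regularity by flattening, reflection and tangential differentiation, the paper simply cites the Lions--Magenes trace and regularity theorems for the lifting and for the Dirichlet/Neumann problems.
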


\begin{proof}
Since there exists $E\in H^{m+2}(\Omega )$ such that $E=g$ on $\Gamma _0$ and $\partial _\nu E=h$ in $\Gamma$ with 
\[
\|E\|_{H^{2+m}(\Omega )}\leq C\left( \|g\|_{H^{m+3/2}(\Gamma _0)}+ \|h\|_{H^{m+1/2}(\Gamma _1)} \right).
\]
(e.g. for instance \cite{LM}[Theorem 8.3, page 39]), we  see, replacing $u$ by $u-E$ and $f$ by $f-\Delta E$, that it is enough to prove the theorem with $(g,h)=(0,0)$. We consider then the BVP
\begin{equation}\label{bvp1}
\left\{
\begin{array}{lll}
-\Delta u = f \;\; &\mbox{in}\;   Q, 
\\
u =0 &\mbox{in}\;  \Gamma_0 , 
\\
\partial _\nu u=0 &\mbox{in}\;  \Gamma_1 .
\end{array}
\right.
\end{equation}
Let $f\in L^2(\Omega )$. Because $w\in V\rightarrow \|\nabla w\|_{L^2(\Omega )^d}$ defines an equivalent norm on $V$, by the Lax-Milgram lemma, there exists a unique variational solution $u\in H^1(\Omega )$ of the BVP \eqref{bvp1}. That is 
\[
\int_\Omega \nabla u\cdot \nabla vdx=\int_\Omega fvdx\;\; \mathrm{for\, all}\; v\in V.
\]

\smallskip
Let $(\Omega _0,\Omega _1)$ be an open covering of an open neighborhood of $\overline{\Omega}$ such that $\Gamma _i\subset \Omega _i$ and $\Gamma_i \cap \Omega _{1-i}=\emptyset$, $i=0,1$. Let $(\psi _0,\psi _1)$ be a partition of unity subordinate to the covering $(\Omega _0,\Omega _1)$ with $\psi _i\in C_0^\infty (\Omega _i)$ and $\psi _i=1$ in an neighborhood of $\Gamma _i$, $i=1,2$.

\smallskip
Let $u_i=u\psi _i$, $i=0,1$. Then a straightforward computation shows that $u_0$ and $u_1$ are the respective solutions of the variational problems
\begin{align*}
&\int_\Omega \nabla u_0\cdot \nabla vdx=\int_\Omega f_0vdx\;\; \mathrm{for\, all}\; v\in H_0^1(\Omega ),
\\
&\int_\Omega \nabla u_1\cdot \nabla vdx=\int_\Omega f_1vdx\;\; \mathrm{for\, all}\; v\in H^1(\Omega ).
\end{align*}
Here 
\[
f_i=-2\nabla \psi_i \cdot \nabla u+\psi _if,\;\; i=0,1.
\]
Since the regularity theorem \cite{LM}[Theorem 5.4, page 165] is valid for both the Dirichlet and the Neumann BVP's, we obtain that $u_i\in H^2(\Omega )$ and
\[
\|u_i\|_{H^2(\Omega )}\leq C\|f_i\|_{L^2(\Omega )}\leq C'\left(\|u\|_{H^1(\Omega )}+\|f\|_{L^2(\Omega )}\right)\leq C''\|f\|_{L^2(\Omega )},\;\; i=0,1.
\]
Therefore $u=u_0+u_1\in H^2(\Omega )$ and 
\[
\|u\|_{H^2(\Omega )}\leq C\|f\|_{L^2(\Omega )}.
\]
Next, if $f\in H^1(\Omega )$ then $f_i\in H^1(\Omega )$, $i=0,1$. We can then repeat the previous argument to conclude that $u\in H^3(\Omega )$ and estimate \eqref{est} holds with $m=1$. We complete the proof by using an induction argument in $m$.
\end{proof}

\begin{proof}[Proof of Theorem \ref{theoremu}]
(a) Let $(\lambda _k)$ be the sequence of eigenvalues, counted according to their multiplicity, of the unbounded operator $-A$. Let $(\varphi _k)$ be a sequence of eigenfunctions forming an orthonormal basis of $L^2(\Omega )$, each  $\varphi _k$ corresponds to $\lambda _k$.

\smallskip
We note that, according to Theorem \ref{theoremer}, 
\[
\varphi_k\in \bigcap_{m\in \mathbb{N}}H^m(\Omega )=C^\infty (\overline{\Omega}).
\]

\smallskip
We fix $k$ and we take $\left( \begin{array} {c} u^0\\v^0\end{array}\right)=\left( \begin{array} {c} \varphi_k\\0\end{array}\right)$. Then $\left( \begin{array} {c} u^0\\v^0\end{array}\right)\in D(\mathcal{A}_0)\cap D(\mathcal{A}_b)$ and $u_0= \cos (\sqrt{\lambda _k}t)\varphi _k$ is the solution of the IBVP \eqref{ibvp1} corresponding to this particular choice of $\left( \begin{array} {c} u^0\\v^0\end{array}\right)$. 

\smallskip
We have $\partial _t u_0=-\sqrt{\lambda _k}\sin (\sqrt{\lambda _k}t)\varphi _k$. Hence $\partial _t u_0\neq 0$ a.e. on $\Gamma _1$ as an immediate consequence of Lemma \ref{lemma1a}. In other words, $\left( \begin{array} {c} \varphi_k\\0\end{array}\right)\in \mathscr{I}$.

\smallskip
(b) Let $(u^0,v^0)\in \mathscr{I}$. If $\partial _\nu {u_b}_{|\Sigma _1}=\partial _\nu {u_0}_{|\Sigma _1}=0$, then $b\partial _t{u_b}_{|\Sigma _1}=0$. Also, by the uniqueness of the solution of the IBVP \eqref{ibvp1}, we conclude that $u_b=u_0$. Consequently, 
\begin{equation}\label{ui}
b\partial _t{u_0}_{|\Sigma _1}=0.
\end{equation}
By our assumption the set where $\partial _t{u_0}_{|\Sigma _1}$ vanishes is of zero measure. Therefore, \eqref{ui} implies that $b=0$ a.e. on $\Gamma _1$.
\end{proof}


\section{Proof of Theorem \ref{theorems}}\label{section3}

We begin by proving an extension lemma.

 \begin{lemma}\label{lemma1}
(Extension Lemma) Let $k,\ell$ two non negative integers. For any $g\in C^\ell ([0,\tau ]; H^{k+1/2}(\Gamma _1))$, there exists $G\in C^\ell ([0,\tau ]; H^{k+2}(\Omega))$ so that, for any $t\in [0,\tau ]$,
\[
\left\{
\begin{array}{lll}
\Delta G(t) = 0 \;\; &\textrm{in}\;   Q, 
\\
G(t) = 0 &\textrm{on}\;  \Gamma_0 , 
\\
\partial _\nu G(t) =g(t) &\textrm{on}\;  \Gamma_1 .
\end{array}
\right.
\]
and
\begin{equation}\label{aest1}
\|G^{(j)}(t)\|_{H^{k+2}(\Omega )}\leq C \|g^{(j)}(t)\|_{H^{k+1/2}(\Gamma _1)},\;\; 0\leq j\leq \ell.
\end{equation}
Here the constant $C$ is independent on $g$.
\end{lemma}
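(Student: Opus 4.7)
The plan is to reduce this to the elliptic regularity theorem (Theorem~\ref{theoremer}) applied pointwise in $t$, together with the observation that the associated solution operator is linear and continuous, so time regularity of the data transfers to time regularity of the extension.

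First, for each fixed $t \in [0,\tau]$, I would apply Theorem~\ref{theoremer} with source $f = 0$, Dirichlet data $0$ on $\Gamma_0$, and Neumann data $g(t) \in H^{k+1/2}(\Gamma_1)$. This gives a unique $G(t) \in H^{k+2}(\Omega)$ that is harmonic in $\Omega$, vanishes on $\Gamma_0$, has $\partial_\nu G(t) = g(t)$ on $\Gamma_1$, and satisfies
\[
\|G(t)\|_{H^{k+2}(\Omega)} \leq C \|g(t)\|_{H^{k+1/2}(\Gamma_1)},
\]
with $C$ independent of $t$ (since $C$ in Theorem~\ref{theoremer} depends only on $\Omega$ and $m$).

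Next, I would encode this as a bounded linear operator $T : H^{k+1/2}(\Gamma_1) \to H^{k+2}(\Omega)$, where $T(h)$ is the unique solution of the boundary value problem with $f = 0$, vanishing Dirichlet data, and Neumann data $h$. Uniqueness in Theorem~\ref{theoremer} guarantees linearity, and the estimate above gives continuity. Then $G(t) := T(g(t))$ is the desired extension.

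The time regularity then follows automatically: since $g \in C^\ell([0,\tau]; H^{k+1/2}(\Gamma_1))$ and $T$ is bounded linear, the composition $G = T \circ g$ lies in $C^\ell([0,\tau]; H^{k+2}(\Omega))$. Concretely, from the difference quotient identity
\[
\frac{G(t+h)-G(t)}{h} = T\!\left( \frac{g(t+h)-g(t)}{h} \right),
\]
continuity of $T$ yields $G^{(j)}(t) = T(g^{(j)}(t))$ for $0 \leq j \leq \ell$, and the estimate \eqref{aest1} follows by applying the continuity bound of $T$ to $g^{(j)}(t)$.

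There is no real obstacle here; the statement is essentially Theorem~\ref{theoremer} applied in a parametrized fashion. The only point that deserves care is that the solution map truly is a \emph{single} linear operator independent of $t$ (so that differentiation commutes with it), which is exactly the content of uniqueness in Theorem~\ref{theoremer}. Once this is noted, the proof is a one-line verification together with transfer of regularity through a bounded linear map.
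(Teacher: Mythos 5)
Your proposal is correct and matches the paper's proof essentially verbatim: the paper also defines the solution operator $E$ via Theorem~\ref{theoremer} with $f=0$, notes it is linear and bounded by the elliptic estimate, and sets $G(t)=Eg(t)$ so that time regularity and \eqref{aest1} transfer through the bounded linear map. No gaps.
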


\begin{proof}
Let $h\in H^{k+1/2}(\Gamma _1)$.  By Theorem \ref{theoremer} there exists a unique solution $Eh\in H^{k+2}(\Omega )$  of the BVP
\[
\left\{
\begin{array}{lll}
\Delta w = 0 \;\; &\textrm{in}\;   Q, 
\\
w = 0 &\textrm{on}\;  \Gamma_0 , 
\\
\partial _\nu w =h &\textrm{on}\;  \Gamma_1 .
\end{array}
\right.
\]
Moreover, we have the estimate
\begin{equation}\label{es}
\|Eh\|_{H^{k+2}(\Omega )}\leq C\|h\|_{H^{k+1/2}(\Gamma _1)},
\end{equation}
for some constant $C$ independent on $h$.

\smallskip
If $g\in C^\ell ([0,\tau ]; H^{k+1/2}(\Gamma _1))$, then, using that $E: h\in H^{k+1/2}(\Gamma _1)\rightarrow Eh\in H^{k+2}(\Omega )$ is linear bounded operator (the fact that $E$ is bounded  is a consequence of estimate \eqref{es}), it is straightforward to check that $G(t)=Eg(t)$ satisfies the required properties.
\end{proof}

\smallskip
Next, we consider the following non homogenous IBVP
\begin{equation}\label{ibvp2}
\left\{
\begin{array}{lll}
\partial _t^2 u - \Delta u = 0 \;\; &\textrm{in}\;   Q, 
\\
u = 0 &\textrm{on}\;  \Sigma _0, 
\\
\partial _\nu u =g &\textrm{on}\;  \Sigma _1, 
\\
u(\cdot ,0) = u^0,\; \partial_t u (\cdot ,0) = v^0.
\end{array}
\right.
\end{equation}

\begin{proposition}\label{proposition1}
We assume that $g\in C^3 ([0,\tau ]; H^{1/2}(\Gamma _1))$, $u^0\in H^2(\Omega )\cap V$, $v^0\in V$ and the compatibility condition
\begin{equation}\label{cc}
\partial _\nu u^0-g(\cdot ,0)=0\;\; \rm{on}\; \Gamma _1
\end{equation}
holds. Then the IBVP \eqref{ibvp2} has unique solution $u$ such that 
\[
\left( \begin{array} {c} u\\u'\end{array}\right)\in \mathscr{X}=C^1([0,\tau ],H^1(\Omega )\times L^2(\Omega ))\cap C([0,\tau ];H^2(\Omega )\times H^1(\Omega ))
\]
and
\begin{equation}\label{aest3}
\left\| \left( \begin{array} {c} u\\u'\end{array}\right)\right\|_{\mathscr{X}}\leq C\left( \left\| \left( \begin{array} {c} u^0\\v^0\end{array}\right)\right\|_{H^2(\Omega )\times H^1(\Omega )}+\|g\|_{C^3 ([0,\tau ]; H^{1/2}(\Gamma _1))}\right).
\end{equation}
Moreover, under the additional assumptions
\begin{equation}\label{aa}
g\in C^6([0,\tau ];H^{1/2}(\Gamma _1 )),\quad \left( \begin{array} {c} u^0-G(0)\\v^0-G'(0)\end{array}\right)\in D(\mathcal{A}_0^4),
\end{equation}
$u'\in C^3([0,\tau ];H^1(\Omega ))$ and
\begin{equation}\label{aest4}
\|u'\|_{C^3([0,\tau ];H^1(\Omega ))}\leq C\left( \left\| \left( \begin{array} {c} u^1\\v^1\end{array}\right)\right\|_{D(\mathcal{A}_0^4)}+\|g\|_{C^6 ([0,\tau ]; H^{1/2}(\Gamma _1))}\right).
\end{equation}
\end{proposition}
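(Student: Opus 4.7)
The plan is to reduce \eqref{ibvp2} to an abstract inhomogeneous Cauchy problem associated with the m-dissipative operator $\mathcal{A}_0$ by lifting the boundary datum $g$ to the interior using Lemma \ref{lemma1}, and then to invoke the classical regularity theory for $C_0$-semigroups of contractions.

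First, for each $t\in[0,\tau]$ I set $G(t)=Eg(t)$, where $E$ is the extension operator constructed in the proof of Lemma \ref{lemma1}. That lemma yields $G\in C^3([0,\tau];H^2(\Omega))$ with $\Delta G(t)=0$ in $\Omega$, $G(t)=0$ on $\Gamma_0$, and $\partial_\nu G(t)=g(t)$ on $\Gamma_1$, together with the quantitative bound \eqref{aest1}. Writing $w=u-G$, the function $w$ must solve
\begin{equation*}
\partial_t^2 w-\Delta w=-\partial_t^2 G\text{ in }Q,\qquad w=0\text{ on }\Sigma_0,\qquad \partial_\nu w=0\text{ on }\Sigma_1,
\end{equation*}
with initial data $w(\cdot,0)=u^0-G(0)$ and $\partial_t w(\cdot,0)=v^0-G'(0)$. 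The compatibility condition \eqref{cc} is precisely what forces $\partial_\nu(u^0-G(0))=0$ on $\Gamma_1$, so the shifted initial pair belongs to $D(\mathcal{A}_0)$.

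Viewing $W=(w,\partial_t w)$ as a column in $\mathcal{H}$ and setting $F(t)=(0,-\partial_t^2 G(t))$, the reformulated problem reads
\begin{equation*}
W'(t)=\mathcal{A}_0 W(t)+F(t),\qquad W(0)=W_0\in D(\mathcal{A}_0).
\end{equation*}
Because $G\in C^3([0,\tau];H^2(\Omega))$, we have $F\in C^1([0,\tau];\mathcal{H})$. Standard semigroup theory for inhomogeneous Cauchy problems (Duhamel's formula applied to the contraction semigroup $e^{t\mathcal{A}_0}$) then delivers a unique classical solution $W\in C^1([0,\tau];\mathcal{H})\cap C([0,\tau];D(\mathcal{A}_0))$ with norm controlled by $\|W_0\|_{D(\mathcal{A}_0)}+\|F\|_{C^1([0,\tau];\mathcal{H})}$. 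Since $D(\mathcal{A}_0)$ embeds continuously into $H^2(\Omega)\times H^1(\Omega)$, we recover $(w,w')\in\mathscr{X}$. Adding back $G$ and combining with \eqref{aest1} yields \eqref{aest3}; uniqueness for $u$ follows by applying the abstract uniqueness to the difference of two putative solutions of \eqref{ibvp2}.

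For the sharper statement, I would iterate the same bootstrap. The stronger hypothesis $g\in C^6([0,\tau];H^{1/2}(\Gamma_1))$ raises $G$ to $C^6([0,\tau];H^2(\Omega))$, so that $F$ has enough time derivatives and, crucially, each $F^{(j)}(t)$ takes values in $\{0\}\times V$, which by \eqref{in} sits in $D(\mathcal{A}_0)$. Together with $W_0\in D(\mathcal{A}_0^4)$, successive differentiation of Duhamel's formula combined with \eqref{reg1} for $\mathcal{A}_0$ yields $W\in C^3([0,\tau];D(\mathcal{A}_0))$, whence $w'\in C^3([0,\tau];V)\subset C^3([0,\tau];H^1(\Omega))$. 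Adding $G'\in C^5([0,\tau];H^2(\Omega))$ then gives \eqref{aest4}. The main obstacle is the bookkeeping of compatibility conditions at each iteration: one must verify that the successive combinations of $W_0$, $F(0)$, $F'(0),\ldots$ involved in the differentiated Duhamel identity lie in the higher-order domains $D(\mathcal{A}_0^j)$ so that mild solutions can be upgraded to classical solutions of the required order. The precise assumption $W_0\in D(\mathcal{A}_0^4)$ is designed to make this bookkeeping close.
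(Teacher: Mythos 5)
Your proposal is correct and follows essentially the same route as the paper: lift $g$ to the harmonic extension $G=Eg$ from Lemma \ref{lemma1}, subtract it to reduce \eqref{ibvp2} to the inhomogeneous abstract Cauchy problem for $\mathcal{A}_0$ with source $F=\bigl(0,-G''\bigr)$ and shifted data placed in $D(\mathcal{A}_0)$ by the compatibility condition \eqref{cc}, then apply Duhamel's formula and bootstrap under \eqref{aa} for the $C^3$ regularity of $u'$. You even get the sign of the source term right (the paper writes $F=G''$, an apparent typo for $-G''$), and your bookkeeping of the higher-order compatibility is slightly more explicit than the paper's one-line deduction from \eqref{df}.
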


\begin{proof}
We denote by $G\in C^3([0,\tau ];H^2(\Omega ))$ the function given by Lemma \ref{lemma1} and corresponding to $g$. We observe that if $u$ is the solution of the IBVP \eqref{ibvp2} then, $v=u-G$ is the solution of following one

\begin{equation}\label{ibvp3}
\left\{
\begin{array}{lll}
\partial _t^2 v - \Delta v = F \;\; &\textrm{in}\;  Q, 
\\
v = 0 &\textrm{on}\;  \Sigma _0, 
\\
\partial _\nu v =0 &\textrm{on}\;  \Sigma _1, 
\\
v(\cdot ,0) = u^1,\; \partial_t u (\cdot ,0) = v^1.
\end{array}
\right.
\end{equation}
Here
\[
F=G'',\quad u^1=u^0-G(0),\quad v^1=v^0-G'(0).
\]
By the regularity assumptions on $u^0$, $v^0$ and $g$ and compatibility condition \eqref{cc}, we get that
\[
F\in C^1([0,\tau ];L^2(\Omega )),\quad \left( \begin{array} {c} u^1\\v^1\end{array}\right)\in D(\mathcal{A}_0).
\]

Therefore, the IBVP \eqref{ibvp3} has a unique solution $v$ so that
\[
\left( \begin{array} {c} v\\v'\end{array}\right)\in C^1([0,\tau ],\mathcal{H})\cap C([0,\tau ];D(\mathcal{A}_0)).
\]
This solution is given by
\begin{equation}\label{df}
\left( \begin{array} {c} v(t)\\v'(t)\end{array}\right)=e^{t\mathcal{A}_0}\left( \begin{array} {c} u^1\\v^1\end{array}\right)+\int_0^t e^{(t-s)\mathcal{A}_0}\left( \begin{array} {c} 0\\F(s)\end{array}\right)ds.
\end{equation}
In light of estimate \eqref{aest1}, we have
\begin{align}
&\left\| \left( \begin{array} {c} v\\v'\end{array}\right)\right\|_{C^1([0,\tau ],\mathcal{H})\cap C([0,\tau ];D(\mathcal{A}_0))}\label{aest2}
\\
&\hskip 3cm \leq C\left( \left\| \left( \begin{array} {c} u^0\\v^0\end{array}\right)\right\|_{H^2(\Omega )\times H^1(\Omega )}+\|g\|_{C^3 ([0,\tau ]; H^{1/2}(\Gamma _1))}\right).\nonumber
\end{align}
Since $u=v+G$, we deduce that $\left( \begin{array} {c} u\\u'\end{array}\right)\in\mathscr{X}$ and \eqref{aest2} implies \eqref{aest3}.

\smallskip
Next, we assume that the additional assumptions: 
\[
g\in C^6([0,\tau ];H^{1/2}(\Gamma _1 ),\quad \left( \begin{array} {c} u^1\\v^1\end{array}\right)\in D(\mathcal{A}_0^4),
\]
hold. Then we deduce from \eqref{df} that $u'\in C^3([0,\tau ];H^1(\Omega ))$ and \eqref{aest4} is satisfied.
\end{proof}

\begin{proof}[Proof of Theorem \ref{theorems}]
We make the following assumption
\[
\left( \begin{array} {c} u^0\\v^0\end{array}\right)\in \bigcap_{0\leq \rho \leq 1}D(\mathcal{A}_{\rho b}^7).
\]
According to regularity result \eqref{reg1}, we have
\[
u'_{\rho b}{_{|\Gamma _1}}\in C^6([0,\tau ];H^{1/2}(\Gamma _1)),\;\; 0\leq \rho \leq 1.
\]
and
\begin{equation}\label{aest5}
\|u'_{\rho b}{_{|\Gamma _1}}\|_{C^6([0,\tau ];H^{1/2}(\Gamma _1))}\leq C\left\|\left( \begin{array} {c} u^0\\v^0\end{array}\right)\right\|_{D(\mathcal{A}_0^7)},\;\; 0\leq \rho \leq 1.
\end{equation}

We see that $v_\rho =u_{\rho b}-u_0$ solves the IBVP \eqref{ibvp2} with $g=-\rho bu'_{\rho b}$. By using \eqref{aest4}, we get
\begin{equation}\label{aest6}
\| v'_\rho \|_{C^3([0,\tau ];H^{1/2}(\Gamma _1))}=\|u'_{\rho b}-u'_0\|_{C^3([0,\tau ];H^{1/2}(\Gamma _1))}\leq C\rho ,\;\; 0\leq \rho \leq 1.
\end{equation}

Next, let $w_0$ be the solution of the IBVP \eqref{ibvp2} corresponding to $u^0=v^0=0$ and $g=-bu'_0$. Then $z=u_{\rho b}-u_0-\rho w_0$ is the solution of the IBVP\eqref{ibvp2} corresponding to $u^0=v^0=0$ and $g=-\rho b(u'_{\rho b}-u'_0)$. Hence
\[
\| \partial _\nu z\|_{C^3([0,\tau ];H^{1/2}(\Gamma _1))}=\rho \|b(u'_{\rho b}-u'_0)\|_{C^3([0,\tau ];H^{1/2}(\Gamma _1))}.
\]
This estimate, in combination with \eqref{aest6}, yields
\[
\| \partial _\nu z\|_{C^3([0,\tau ];H^{1/2}(\Gamma _1))}\leq C\rho ^2,\;\; 0\leq \rho \leq \rho_0.
\]
Therefore,
\[
\lim_{\rho \downarrow 0}\frac{\partial _\nu u_{\rho b}-\partial_\nu u_0}{\rho}=-b\partial _tu_0\;\; \rm{in}\; C^3([0,\tau ];H^{1/2}(\Gamma _1))
\]
and then
\[
\lim_{\rho \downarrow 0}\frac{\partial _\nu u_{\rho b}-\partial_\nu u_0}{\rho}=-b\partial _tu_0\;\; \rm{in}\; L^2(\Sigma _1).
\]

By using $2\kappa =\|b\partial _tu_0\|_{L^2(\Sigma _1)}\neq 0$, we get
\[
\kappa \rho \leq \| \partial _\nu u_{\rho b}-\partial_\nu u_0\|_{L^2(\Sigma _1)},\;\; 0\leq \rho \leq \rho_0,
\]
for some $0<\rho _0\leq 1$.

\smallskip
We can rewrite this estimate as follows
\[
\widetilde{\kappa} \| \rho b-0\|_{B_{1/2,1}(\Gamma _1)} \leq \| \partial _\nu u_{\rho b}-\partial_\nu u_0\|_{L^2(\Sigma _1)},\;\; 0\leq \rho \leq \rho_0.
\]
This completes the proof.
\end{proof}


\section*{Acknowledgments}
We would like to thank Luc Robbiano who indicated to us the principle of the proof of Lemma \ref{lemma1a}.


\end{document}